
\documentclass{amsproc}
\usepackage{amssymb}
\usepackage{amsfonts}

\setcounter{MaxMatrixCols}{10}

\theoremstyle{plain}

\newtheorem{definition}{Definition}
\newtheorem{example}{Example}

\newtheorem{proposition}{Proposition}
\newtheorem{remark}{Remark}

\newtheorem{theorem}{Theorem}
\numberwithin{equation}{section}
\input{tcilatex}

\begin{document}
\title[Similar Curves]{Generalized Similar Frenet Curves}
\author{Fatma G\"{O}K\c{C}EL\.{I}K}
\address{Department of Mathematics, Faculty of Science, University of Ankara
Tandogan, Ankara, TURKEY }
\email{fgokcelik@ankara.edu.tr}
\author{Seher KAYA}
\address{Department of Mathematics, Faculty of Science, University of Ankara
Tandogan, Ankara, TURKEY}
\email{seherkaya@ankara.edu.tr}
\author{Yusuf YAYLI}
\address{Department of Mathematics, Faculty of Science, University of Ankara
Tandogan, Ankara, TURKEY}
\email{yyayli@science.ankara.edu.tr}
\author{F. Nejat EKMEKC\.{I}}
\address{Department of Mathematics, Faculty of Science, University of Ankara
Tandogan, Ankara, TURKEY}
\email{ekmekci@science.ankara.edu.tr}
\date{March 10, 2014}
\subjclass[2000]{ 53A55, 53A04}
\keywords{Self-similar curves, Similar curves, Shape curvatures, Focal
curvatures.}
\thanks{}

\begin{abstract}
The paper is devoted to differential geometric invariants determining a
Frenet curve in $E^{n}$ up to a direct similarity$.$ These invariants can be
presented by the Euclidean curvatures in terms of an arc lengths of the
spherical indicatrices. Then, these invariants expressed by focal curvatures
of the curve. And then, we give the relationship between curvatures of
evolute curve and shape curvatures. Morever, these invariants is given the
geometric interpretation.
\end{abstract}

\maketitle

\section{Introduction}

Curves are important for many areas of science. In Physics, the particle
orbits as it moves is determined by using the curves. Especially, the curves
have an important place in fractal science. In this field, fractal curves
variation based on changing the similarities mapping on the same segments.
In the human body,fingerprint is the remarkable example of the fractal
curve. Fingerprint is occurs the similar curves. In nature, fractal curve
which is called Koch curve is seen in the structure of snowflake. Koch curve
is constructed by using the base curve and its similar curves.%
\begin{equation*}
\FRAME{itbpF}{4.9536in}{0.9158in}{0in}{}{}{Figure}{\special{language
"Scientific Word";type "GRAPHIC";maintain-aspect-ratio TRUE;display
"USEDEF";valid_file "T";width 4.9536in;height 0.9158in;depth
0in;original-width 7.1036in;original-height 1.2903in;cropleft "0";croptop
"1";cropright "1";cropbottom "0";tempfilename
'N228BN00.wmf';tempfile-properties "XPR";}}
\end{equation*}%
\begin{equation*}
\text{Figure 1. The construction of Koch curve.}
\end{equation*}

Euclidean Geometry can be described as a study of the properties of
geometric figures. Only the properties which do not change under isometries
deserve to be called geometric properties. A similarity of the Euclidean
space $E^{n}$ is an automorphism of $E^{n}$ for which the ratio: distance
between two arbitrary points to distance between the transformed points is a
positive constant. This transformation preserves angles. In this study, we
investigate preserve which properties of the curves under similarity
transformation. The arc length parameter of the curve is not protected up to
similarity transformation but arc length parameter of indicatrix curves are
preserved. So, the curve is re-parameterized in terms of arc length
parameters of indicatrix curves.

Encheva and Georgiev used spherical tangent indicatrix of the curve and its
arc length parameter. As general of this study, we use all spherical
indicatrix curves and their arc length parameters that is, we use its
spherical images $V_{i}$ which is the $i-th$ Frenet vector field in $E^{n}$.
After, we calculated some differential-geometric invariants of curves up to
direct similarities. The invariants are introduced shape curvatures. Then,
we determine a curve which given the shape curvatures in the odd-dimensional
and even-dimensional Euclidean space and this is illustrated with an example
in $E^{3}.$ Finally, a geometric interpretation of shape curvatures are
given in $E^{3}$.

\section{Preliminaries}

In this section, we review some basic concepts on classical differential
geometry of space curves in Euclidean $n-$space. For any two vectors $%
x=(x_{1},x_{2},...,x_{n})$ and $y=(y_{1},y_{2},...,y_{n})\in E^{n}$, $x.y$
as the standard inner product. Let $\alpha :I\subset \mathbb{R}\rightarrow
E^{n}$ be a curve with $\dot{\alpha}(t)\neq 0,$ where $\dot{\alpha}(t)=%
\dfrac{d\alpha }{dt}$. We also denote the norm of $x$ by $\left \Vert
x\right \Vert .$ The arc length parametres of curve $\alpha $ is determined
such that $\left \Vert \alpha ^{\prime }(s)\right \Vert =1,$ where $\alpha
^{\prime }(s)=\dfrac{d\alpha }{ds}.$ Let $V_{1},$ $V_{2},$..., $V_{n}$ be a
Frenet moving $n-$frame of the curve $\alpha .$ Then the following
Frenet-Serret formula holds%
\begin{eqnarray}
V_{1}^{\prime }(s) &=&\kappa _{1}(s)V_{2}(s)  \label{1} \\
V_{i}^{\prime }(s) &=&-\kappa _{i-1}(s)V_{i-1}(s)+\kappa _{i}(s)V_{i+1}(s) 
\notag \\
V_{n}^{\prime }(s) &=&-\kappa _{n-1}(s)V_{n-1}(s)  \notag
\end{eqnarray}%
where $\kappa _{1},$ $\kappa _{2},...,\kappa _{n-1}$ are the curvatures of
the curve $\alpha $ at $s$.

We will study the differential-geometric invariants of a curve in $E^{n}$
with respect to the group $Sim^{+}(\mathbb{R}^{n})$ of all
orientation-preserving similarities of $\mathbb{R}^{n}.$ Any such similarity 
$F:\mathbb{R}^{n}\rightarrow \mathbb{R}^{n}$ is called a \textit{direct
similarity }and can be expressed in the form%
\begin{equation*}
F(x)=\lambda Ax+b
\end{equation*}%
where $x\in E^{n}$ is an arbitrary point, $A$ is an orthogonal $n\times n$
matrix, $b$ is a translation vector and $\lambda >0$ is a real constant.

We denote the image of the curve $\alpha $ under the direct similarity $F$
by the curve $\bar{\alpha},$ i.e., $\bar{\alpha}=F\circ \alpha .$ Then, the
curve $\bar{\alpha}$ can be expressed as 
\begin{equation}
F\circ \alpha (t)=F(\alpha (t))=\lambda A\alpha (t)+b.  \label{2}
\end{equation}%
The arc length functions of $\alpha $ and $\bar{\alpha}$ are 
\begin{equation}
s(t)=\dint \limits_{t_{0}}^{t}\left \Vert \frac{d\alpha (u)}{du}\right \Vert
du\text{ \ \ \ and \ \ \ }\bar{s}(t)=\dint \limits_{t_{0}}^{t}\left \Vert 
\frac{d\bar{\alpha}(u)}{du}\right \Vert du=\lambda s(t).  \label{3}
\end{equation}%
Let $\left \{ \bar{V}_{1},\bar{V}_{2},...,\bar{V}_{n},\bar{\kappa}_{1},\bar{%
\kappa}_{2},...,\bar{\kappa}_{\left( n-1\right) }\right \} $ be a Frenet
apparatus of the curve $\bar{\alpha}.$ Since $\dfrac{ds}{d\bar{s}}=\dfrac{1}{%
\lambda }(=$const.$)$ the curvatures of the curve $\bar{\alpha}$ are given
by 
\begin{equation}
\bar{\kappa}_{i}=\dfrac{1}{\lambda }\kappa _{i}(s)\text{ \ \ \ ,\ \ \ \ \ }%
i=1,2,...,n-1.  \label{4}
\end{equation}%
We obtain $\kappa _{i}ds=\bar{\kappa}_{i}d\bar{s}.$

\section{Expressed with respect to arc length parameter $\protect\sigma $ of
the $V_{i}-$ indicatrix curve of the curve $\protect\alpha $}

In this section, we give some characterizations of the curve $\alpha $ by
using the arc length parameters of its $V_{i}$ indicatrix curve.

Let $\gamma (\sigma _{i})=V_{i}(s)$ be the spherical $V_{i}$ indicatrix of
the curve $\alpha $ and be $\sigma _{i}$ an arc length parameter of the
curve $\gamma .$ Then the curve $\alpha $ admits a reparametrization by $%
\sigma _{i}$%
\begin{equation*}
\alpha =\alpha (\sigma _{i}):I\subset \mathbb{R}\rightarrow \mathbb{E}^{n}.
\end{equation*}

It is clear that 
\begin{equation}
d\sigma _{i}=\sqrt{\left( \kappa _{i-1}(s)\right) ^{2}+\left( \kappa
_{i}(s)\right) ^{2}}ds\text{ \ \ \ \ , \ \ \ \ }\dfrac{d}{d\sigma _{i}}=%
\dfrac{1}{\sqrt{\left( \kappa _{i-1}\right) ^{2}+\left( \kappa _{i}\right)
^{2}}}\dfrac{d}{ds}.  \label{6}
\end{equation}%
Hence, $d\sigma _{i}=\sqrt{\left( \kappa _{i-1}\right) ^{2}+\left( \kappa
_{i}\right) ^{2}}ds$ is invariant under the group of the direct similarities
of $E^{n}.$

Let $V_{1},$ $V_{2},$..., $V_{n}$ be a Frenet frame field along the curve $%
\alpha $ parameterized by the arc length parameter $\sigma _{i}$ of its $%
V_{i}-$indicatrix curve. Then the structure equations of the curve $\alpha $
are given by%
\begin{equation}
\dfrac{d\alpha }{d\sigma _{i}}=\dfrac{1}{\sqrt{\left( \kappa _{i-1}\right)
^{2}+\left( \kappa _{i}\right) ^{2}}}V_{1}(s),  \label{7}
\end{equation}

\begin{equation}
\dfrac{d}{d\sigma _{i}}%
(V_{1},V_{2},...,V_{n-1},V_{n})^{T}=K(V_{1},V_{2},...,V_{n-1},V_{n})^{T}
\label{8}
\end{equation}

where,

\begin{equation*}
K=\left( 
\begin{array}{ccccc}
{\tiny 0} & \tfrac{\kappa _{1}}{\sqrt{\left( \kappa _{i-1}\right)
^{2}+\left( \kappa _{i}\right) ^{2}}} & {\tiny \cdots } & {\tiny 0} & {\tiny %
0} \\ 
{\tiny -}\tfrac{\kappa _{1}}{\sqrt{\left( \kappa _{i-1}\right) ^{2}+\left(
\kappa _{i}\right) ^{2}}} & {\tiny 0} & {\tiny \cdots } & {\tiny 0} & {\tiny %
0} \\ 
{\tiny 0} & {\tiny -}\tfrac{\kappa _{2}}{\sqrt{\left( \kappa _{i-1}\right)
^{2}+\left( \kappa _{i}\right) ^{2}}} & {\tiny \cdots } & {\tiny 0} & {\tiny %
0} \\ 
{\tiny 0} & {\tiny 0} & {\tiny \cdots } & {\tiny 0} & {\tiny 0} \\ 
{\tiny \vdots } & {\tiny \vdots } &  & {\tiny \vdots } & {\tiny \vdots } \\ 
{\tiny 0} & {\tiny 0} & {\tiny \cdots } & {\tiny 0} & \tfrac{\kappa _{n-1}}{%
\sqrt{\left( \kappa _{i-1}\right) ^{2}+\left( \kappa _{i}\right) ^{2}}} \\ 
{\tiny 0} & {\tiny 0} & {\tiny \cdots } & {\tiny -}\tfrac{\kappa _{n-1}}{%
\sqrt{\left( \kappa _{i-1}\right) ^{2}+\left( \kappa _{i}\right) ^{2}}} & 
{\tiny 0}%
\end{array}%
\right)
\end{equation*}%
\begin{equation*}
\left \{ \dfrac{1}{\sqrt{\left( \kappa _{i-1}\right) ^{2}+\left( \kappa
_{i}\right) ^{2}}}V_{1}(s),\dfrac{1}{\sqrt{\left( \kappa _{i-1}\right)
^{2}+\left( \kappa _{i}\right) ^{2}}}V_{2}(s),...,\dfrac{1}{\sqrt{\left(
\kappa _{i-1}\right) ^{2}+\left( \kappa _{i}\right) ^{2}}}V_{n}(s)\right \}
\end{equation*}%
are orthogonal n-frame of the curve $\alpha (\sigma _{i}).$

\begin{equation*}
\text{We take \ }\tilde{\kappa}=\dfrac{d}{ds}(\dfrac{1}{\sqrt{\left( \kappa
_{i-1}\right) ^{2}+\left( \kappa _{i}\right) ^{2}}})\text{ \ and }\tilde{%
\kappa}_{j}=\dfrac{\kappa _{j}}{\sqrt{\left( \kappa _{i-1}\right)
^{2}+\left( \kappa _{i}\right) ^{2}}},\ \ j=1,2,\ldots ,n-1.
\end{equation*}%
So we can write,%
\begin{equation*}
\widetilde{K}=\left( 
\begin{array}{ccccccc}
\tilde{\kappa} & \tilde{\kappa}_{1} & 0 & \cdots & 0 & 0 & 0 \\ 
-\tilde{\kappa}_{1} & \tilde{\kappa}\  & \tilde{\kappa}_{2} & \cdots & 0 & 0
& 0 \\ 
0 & -\tilde{\kappa}_{2} & \tilde{\kappa} & \cdots & 0 & 0 & 0 \\ 
0 & 0 & -\tilde{\kappa}_{3} & \cdots & 0 & 0 & 0 \\ 
\vdots & \vdots & \vdots &  & \vdots & \vdots & \vdots \\ 
0 & 0 & 0 & \cdots & -\tilde{\kappa}_{n-2} & \tilde{\kappa} & \tilde{\kappa}%
_{n-1} \\ 
0 & 0 & 0 & \cdots & 0 & -\tilde{\kappa}_{n-1} & \tilde{\kappa}%
\end{array}%
\right)
\end{equation*}%
Then we obtain this equation%
\begin{equation}
\dfrac{d}{d\sigma _{i}}\left( \tfrac{1}{\sqrt{\left( \kappa _{i-1}\right)
^{2}+\left( \kappa _{i}\right) ^{2}}}{\tiny V}_{1}{\tiny ,...,}\tfrac{1}{%
\sqrt{\left( \kappa _{i-1}\right) ^{2}+\left( \kappa _{i}\right) ^{2}}}%
{\tiny V}_{n}\right) ^{T}=\widetilde{K}\left( \tfrac{1}{\sqrt{\left( \kappa
_{i-1}\right) ^{2}+\left( \kappa _{i}\right) ^{2}}}{\tiny V}_{1}{\tiny ,...,}%
\tfrac{1}{\sqrt{\left( \kappa _{i-1}\right) ^{2}+\left( \kappa _{i}\right)
^{2}}}{\tiny V}_{n}\right) ^{T}  \label{9}
\end{equation}

\begin{remark}
For $i=1$ $($that is, $\sigma _{i}=\sigma )$ equation (3.3) coincidence with
equation (2.6) in $\left[ 8\right] $.
\end{remark}

\begin{definition}
Let $\alpha :I\subset \mathbb{R}\rightarrow E^{n}$ be a Frenet space curve
parameterized by an arc length parameter $\sigma _{i}$ of its $V_{i}-$%
indicatrix curve. The functions 
\begin{equation}
\tilde{\kappa}(\sigma _{i})=-\dfrac{d\sqrt{\left( \kappa _{i-1}\right)
^{2}+\left( \kappa _{i}\right) ^{2}}}{\sqrt{\left( \kappa _{i-1}\right)
^{2}+\left( \kappa _{i}\right) ^{2}}d\sigma _{i}}\text{ \ \ and \ \ }\tilde{%
\kappa}_{j}(\sigma _{i})=\dfrac{\kappa _{j}}{\sqrt{\left( \kappa
_{i-1}\right) ^{2}+\left( \kappa _{i}\right) ^{2}}}\text{ \ , \ \ }%
j=1,2,...,n-1  \label{10}
\end{equation}%
are called shape curvatures of the curve $\alpha .$
\end{definition}

\begin{proposition}
\label{Prop.1}Let $\alpha (\sigma _{i}):I\subset \mathbb{R}\rightarrow 
\mathbb{E}^{n}$ Frenet curve the orthogonal frame is \newline
$\left\{ \dfrac{1}{\sqrt{\left( \kappa _{i-1}\right) ^{2}+\left( \kappa
_{i}\right) ^{2}}}V_{1},\dfrac{1}{\sqrt{\left( \kappa _{i-1}\right)
^{2}+\left( \kappa _{i}\right) ^{2}}}V_{2},...,\dfrac{1}{\sqrt{\left( \kappa
_{i-1}\right) ^{2}+\left( \kappa _{i}\right) ^{2}}}V_{n}\right\} .$ \newline
The functions 
\begin{equation*}
\tilde{\kappa}(\sigma _{i})=-\dfrac{d\sqrt{\left( \kappa _{i-1}\right)
^{2}+\left( \kappa _{i}\right) ^{2}}}{\sqrt{\left( \kappa _{i-1}\right)
^{2}+\left( \kappa _{i}\right) ^{2}}d\sigma _{i}}\text{ \ \ and \ \ \ }%
\tilde{\kappa}_{j}(\sigma _{i})=\dfrac{\kappa _{j}}{\sqrt{\left( \kappa
_{i-1}\right) ^{2}+\left( \kappa _{i}\right) ^{2}}}\text{ \ , \ \ }%
j=1,2,...,n-1\text{\ }\ 
\end{equation*}%
are differential geometric invariants determining the curve $\alpha $ up to
a direct similarity.
\end{proposition}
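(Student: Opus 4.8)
The plan is to prove the statement in two halves: first that $\tilde{\kappa}$ and the $\tilde{\kappa}_{j}$ are genuinely invariant under $Sim^{+}(\mathbb{R}^{n})$, and then that they determine $\alpha$ up to such a similarity, the second half being an ODE-uniqueness argument in the style of the classical fundamental theorem of curves.

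For the invariance I would take $\bar{\alpha}=F\circ \alpha$ with $F(x)=\lambda Ax+b$ and combine (\ref{3}) and (\ref{4}). From $\bar{\kappa}_{i}=\tfrac{1}{\lambda }\kappa _{i}$ one gets $\sqrt{\bar{\kappa}_{i-1}^{2}+\bar{\kappa}_{i}^{2}}=\tfrac{1}{\lambda }\sqrt{\kappa _{i-1}^{2}+\kappa _{i}^{2}}$, and since $d\bar{s}=\lambda \,ds$, equation (\ref{6}) yields $d\bar{\sigma}_{i}=\sqrt{\bar{\kappa}_{i-1}^{2}+\bar{\kappa}_{i}^{2}}\,d\bar{s}=\sqrt{\kappa _{i-1}^{2}+\kappa _{i}^{2}}\,ds=d\sigma _{i}$. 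Thus $\sigma _{i}$ is itself a similarity invariant, and substituting into (\ref{10}) the factors of $\lambda $ cancel: $\tilde{\kappa}_{j}$ is a ratio of a curvature to $\sqrt{\kappa _{i-1}^{2}+\kappa _{i}^{2}}$, and $\tilde{\kappa}$ is the negative logarithmic $\sigma _{i}$-derivative of $\sqrt{\kappa _{i-1}^{2}+\kappa _{i}^{2}}$, so both are unchanged.

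For the converse, suppose $\alpha ,\beta :I\rightarrow \mathbb{E}^{n}$ are Frenet curves parametrized by the common arc length $\sigma _{i}$ of their $V_{i}$-indicatrices and having identical shape curvatures $\tilde{\kappa}(\sigma _{i})$ and $\tilde{\kappa}_{j}(\sigma _{i})$. Writing $\rho =\sqrt{\kappa _{i-1}^{2}+\kappa _{i}^{2}}$, $W_{k}=\tfrac{1}{\rho }V_{k}$ for $\alpha $ and $\bar{\rho},\bar{W}_{k}$ for $\beta $, equation (\ref{9}) says that $W=(W_{1},\dots ,W_{n})^{T}$ and $\bar{W}=(\bar{W}_{1},\dots ,\bar{W}_{n})^{T}$ both satisfy the \emph{same} linear system $\tfrac{d}{d\sigma _{i}}W=\widetilde{K}W$, because $\widetilde{K}$ is assembled only from $\tilde{\kappa}$ and the $\tilde{\kappa}_{j}$. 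Fix $\sigma _{i}^{0}\in I$. Since $\{V_{k}(\sigma _{i}^{0})\}$ and $\{\bar{V}_{k}(\sigma _{i}^{0})\}$ are positively oriented orthonormal bases of $\mathbb{R}^{n}$, there is $A\in SO(n)$ with $\bar{V}_{k}(\sigma _{i}^{0})=AV_{k}(\sigma _{i}^{0})$; with $\lambda =\rho (\sigma _{i}^{0})/\bar{\rho}(\sigma _{i}^{0})>0$ this gives $\bar{W}_{k}(\sigma _{i}^{0})=\lambda A\,W_{k}(\sigma _{i}^{0})$. Now $(\lambda AW_{1},\dots ,\lambda AW_{n})^{T}$ also solves $\tfrac{d}{d\sigma _{i}}(\cdot )=\widetilde{K}(\cdot )$: $A$ acts on the ambient $\mathbb{R}^{n}$ and hence commutes with the scalar matrix $\widetilde{K}$ acting on the frame index, and $\lambda ,A$ are constant. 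By uniqueness for linear ODEs it coincides with $\bar{W}$ for all $\sigma _{i}$; in particular $\bar{W}_{1}=\lambda AW_{1}$.

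Finally I would integrate. By the structure equation (\ref{7}), $\tfrac{d\alpha }{d\sigma _{i}}=W_{1}$ and $\tfrac{d\beta }{d\sigma _{i}}=\bar{W}_{1}=\lambda AW_{1}=\tfrac{d}{d\sigma _{i}}(\lambda A\alpha )$, so $\beta (\sigma _{i})=\lambda A\alpha (\sigma _{i})+b$ for a constant $b\in \mathbb{R}^{n}$, i.e.\ $\beta =F\circ \alpha $ with $F\in Sim^{+}(\mathbb{R}^{n})$. Together with the first part this shows the shape curvatures form a complete system of $Sim^{+}$-invariants of $\alpha $. The only non-formal point is this determination half, and inside it the crucial observation is that passing to the rescaled frame $\{V_{k}/\rho\}$ turns the Frenet equations into a linear system whose coefficient matrix $\widetilde{K}$ involves \emph{only} the shape curvatures; once that is in hand, the commutation of the constant rotation $A$ with $\widetilde{K}$ plus ODE uniqueness finish the argument exactly as in the classical fundamental theorem.
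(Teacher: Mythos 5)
Your argument is correct, and it actually does more than the paper's own proof. The paper's proof of this proposition consists only of your first half: it takes $\bar{\alpha}=F\circ \alpha$, uses $\bar{\kappa}_{i}=\tfrac{1}{\lambda }\kappa _{i}$ from (\ref{4}) together with $d\bar{s}=\lambda \,ds$ from (\ref{3}), and checks that $\widetilde{\bar{\kappa}}=\tilde{\kappa}$ and $\widetilde{\bar{\kappa}}_{j}=\tilde{\kappa}_{j}$, i.e.\ it verifies invariance and stops there; the ``determining'' half of the statement is not argued in the paper (for $i=1$ it is essentially delegated to the similar result in \cite{Rad2}). Your second half fills exactly that gap, and it does so correctly: the rescaled frame $W_{k}=V_{k}/\sqrt{\kappa_{i-1}^{2}+\kappa_{i}^{2}}$ satisfies, by (\ref{9}), a linear system whose coefficient matrix $\widetilde{K}$ is built solely from $\tilde{\kappa}$ and the $\tilde{\kappa}_{j}$; a constant $\lambda A$ with $\lambda>0$, $A\in SO(n)$ matching the two rescaled frames at one point commutes with this system, so ODE uniqueness propagates the identification, and integrating (\ref{7}) gives $\beta =\lambda A\alpha +b$. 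Also note that your observation $d\bar{\sigma}_{i}=d\sigma _{i}$ is a cleaner way to organize the invariance computation than the paper's direct substitution. The only points worth making explicit if you wrote this up are the standing hypotheses that both curves are Frenet curves with the relevant curvatures nonvanishing (so $\sqrt{\kappa_{i-1}^{2}+\kappa_{i}^{2}}>0$ and $\widetilde K$ is continuous) and the orientation convention on Frenet frames that guarantees $A\in SO(n)$ rather than merely $O(n)$.
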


\begin{proof}
Let $\left \{ \bar{V}_{1},\bar{V}_{2},...,\bar{V}_{n},\bar{\kappa}_{1},\bar{%
\kappa}_{2},...,\bar{\kappa}_{\left( n-1\right) }\right \} $ be a Frenet
apparatus of the curve $\bar{\alpha}=F\circ \alpha .$ The shape curvatures
of the curve $\bar{\alpha}$ are given

\begin{equation*}
\widetilde{\bar{\kappa}}(\sigma _{i})=-\dfrac{d\sqrt{\left( \bar{\kappa}%
_{i-1}\right) ^{2}+\left( \bar{\kappa}_{i}\right) ^{2}}}{\sqrt{\left( \bar{%
\kappa}_{i-1}\right) ^{2}+\left( \bar{\kappa}_{i}\right) ^{2}}d\bar{\sigma}%
_{i}}
\end{equation*}

By using the $\bar{\kappa}_{i}=\dfrac{1}{\lambda }\kappa _{i}(s)$ \ ,\ \ \ $%
i=1,2,...,n-1$ and $\dfrac{ds}{d\bar{s}}=\dfrac{1}{\lambda }(=$const.$)$ \
we have

\begin{equation*}
\widetilde{\bar{\kappa}}(\sigma _{i})=\tilde{\kappa}(\sigma _{i})
\end{equation*}

and are obtained analogously%
\begin{eqnarray*}
\widetilde{\bar{\kappa}_{j}}(\sigma _{i}) &=&\dfrac{\bar{\kappa}_{j}}{\sqrt{%
\left( \bar{\kappa}_{i-1}\right) ^{2}+\left( \bar{\kappa}_{i}\right) ^{2}}}
\\
\widetilde{\bar{\kappa}_{j}}(\sigma _{i}) &=&\tilde{\kappa}_{j}(\sigma _{i})
\end{eqnarray*}%
The proof is completed.
\end{proof}

\begin{remark}
For $i=1$ $($that is$,$ $\sigma _{i}=\sigma )$ $\tilde{\kappa}$ and $\tilde{%
\kappa}_{j}$ $(j=2,\ldots ,n-1)$ coincidence with $\tilde{\kappa}_{1}$ and $%
\tilde{\kappa}_{i}$ $(i=2,3,\ldots ,n-1)$ respectively in $\left[ 8\right] .$
\end{remark}

\section{The relation between the curve $\protect\alpha $ and its evolute
curve}

\begin{definition}
Let $\beta :I\subset \mathbb{R}\rightarrow E^{3}$ be a unit speed Frenet
with Serret-Frenet apparatus $\left\{ \kappa _{1},\kappa
_{2},V_{1},V_{2},V_{3}\right\} $ and be an evolute curve of $\alpha $. Then
the following equality%
\begin{equation*}
\beta (s)=\alpha (s)+m_{1}(s)V_{2}(s)+m_{2}(s)V_{3}(s)
\end{equation*}%
where $m_{1}(s)=\dfrac{1}{\kappa _{1}(s)},$ $m_{2}(s)=\dfrac{1}{\kappa
_{1}(s)}\cot \left( \dint \kappa _{2}(s)ds\right) .$
\end{definition}

In \cite{Uribe}$,$ R. Uribe-Vargas found formulas which express the
Euclidean curvatures in terms of the focal curvatures. Similarly, we may
represent all differential-geometric invariants $\tilde{\kappa}_{i}$ by the $%
m_{i}$ $(i=1,2)$ curvatures and their derivatives.For $i=1$ we obtain that $%
\tilde{\kappa}_{1}=\dfrac{d}{ds}(\dfrac{1}{\kappa _{1}})$ and $\tilde{\kappa}%
_{2}=\dfrac{\kappa _{2}}{\kappa _{1}}$.

\begin{proposition}
Let $\alpha :I\rightarrow \mathbb{R}^{3}$ be a unit speed Frenet curve with
constant invariants $\tilde{\kappa}_{1}\neq 0$ and $\tilde{\kappa}_{2}\neq 0$%
. Then, the $m_{i}$ $(i=1,2)$ curvatures of $\alpha $ are%
\begin{equation*}
\tilde{\kappa}_{1}=m_{1}^{\prime }\text{ \ \ \ \ \ \ and \ \ \ \ \ \ }\tilde{%
\kappa}_{2}=\dfrac{m_{1}(m_{1}^{\prime }m_{2}-m_{1}m_{2}^{\prime })}{%
m_{1}^{2}+m_{2}^{2}}.
\end{equation*}
\end{proposition}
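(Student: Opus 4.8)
The plan is to reduce everything to the two already-established identities $\tilde{\kappa}_{1}=\frac{d}{ds}\left(\frac{1}{\kappa_{1}}\right)$ and $\tilde{\kappa}_{2}=\frac{\kappa_{2}}{\kappa_{1}}$ (the case $i=1$ of the shape curvatures) together with the explicit definitions $m_{1}=\frac{1}{\kappa_{1}}$ and $m_{2}=\frac{1}{\kappa_{1}}\cot\!\left(\int\kappa_{2}\,ds\right)$. The first equality is then immediate: since $m_{1}=\frac{1}{\kappa_{1}}$ by definition, differentiating gives $m_{1}^{\prime}=\frac{d}{ds}\left(\frac{1}{\kappa_{1}}\right)=\tilde{\kappa}_{1}$. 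So the work is entirely in the second formula.

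For the second identity I would introduce the abbreviation $\theta(s)=\int\kappa_{2}(s)\,ds$, so that $\theta^{\prime}=\kappa_{2}$ and $m_{2}=m_{1}\cot\theta$. First I would differentiate this last relation using the product and chain rules:
\begin{equation*}
m_{2}^{\prime}=m_{1}^{\prime}\cot\theta-m_{1}\kappa_{2}\csc^{2}\theta .
\end{equation*}
Next I would form the combination appearing in the numerator, $m_{1}^{\prime}m_{2}-m_{1}m_{2}^{\prime}$: substituting $m_{2}=m_{1}\cot\theta$ and the expression for $m_{2}^{\prime}$, the two $m_{1}m_{1}^{\prime}\cot\theta$ terms cancel and one is left with $m_{1}^{\prime}m_{2}-m_{1}m_{2}^{\prime}=m_{1}^{2}\kappa_{2}\csc^{2}\theta$. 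Multiplying by $m_{1}$ gives $m_{1}(m_{1}^{\prime}m_{2}-m_{1}m_{2}^{\prime})=m_{1}^{3}\kappa_{2}\csc^{2}\theta$.

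Then I would compute the denominator: $m_{1}^{2}+m_{2}^{2}=m_{1}^{2}(1+\cot^{2}\theta)=m_{1}^{2}\csc^{2}\theta$. Dividing numerator by denominator, the factor $m_{1}^{2}\csc^{2}\theta$ cancels and one obtains $\frac{m_{1}(m_{1}^{\prime}m_{2}-m_{1}m_{2}^{\prime})}{m_{1}^{2}+m_{2}^{2}}=m_{1}\kappa_{2}=\frac{\kappa_{2}}{\kappa_{1}}=\tilde{\kappa}_{2}$, which is exactly the claim. I do not expect a genuine obstacle here — the computation is routine trigonometric bookkeeping; the only points requiring a word of care are that $\sin\theta\neq0$ on $I$ (so that $m_{2}$ and the manipulations with $\csc\theta$ are legitimate, i.e.\ we stay in the range where the evolute construction of Definition~3 applies) and a remark on where, if at all, the standing hypothesis $\tilde{\kappa}_{1},\tilde{\kappa}_{2}$ constant and nonzero is actually used — namely to guarantee $\kappa_{1}\neq0$ and the non-degeneracy of the $V_{i}$-indicatrix, the identity itself being valid pointwise without constancy.
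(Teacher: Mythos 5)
Your proposal is correct and follows essentially the same route as the paper: differentiate $m_{1}=\tfrac{1}{\kappa_{1}}$ to get $m_{1}^{\prime}=\tilde{\kappa}_{1}$, then differentiate $m_{2}=m_{1}\cot\theta$ with $\theta=\int\kappa_{2}\,ds$ and simplify the ratio $\tfrac{m_{1}(m_{1}^{\prime}m_{2}-m_{1}m_{2}^{\prime})}{m_{1}^{2}+m_{2}^{2}}$ down to $\kappa_{2}/\kappa_{1}=\tilde{\kappa}_{2}$. The only difference is that you write out explicitly the final cancellation (numerator $m_{1}^{3}\kappa_{2}\csc^{2}\theta$ over denominator $m_{1}^{2}\csc^{2}\theta$), which the paper states without detail.
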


\begin{proof}
Since $\tilde{\kappa}_{1}=\dfrac{1}{\kappa _{1}(s)}$ and\ $\tilde{\kappa}%
_{2}=\dfrac{\kappa _{2}}{\kappa _{1}}$ then we get%
\begin{equation*}
m_{1}^{\prime }(s)=\left( \dfrac{1}{\kappa _{1}(s)}\right) ^{\prime }=\tilde{%
\kappa}_{1}.
\end{equation*}%
\begin{eqnarray*}
m_{2}^{\prime }(s) &=&\left( \dfrac{1}{\kappa _{1}(s)}\cot \left( \dint
\kappa _{2}(s)ds\right) \right) ^{\prime } \\
&=&m_{1}^{\prime }\dfrac{m_{2}}{m_{1}}-\frac{\kappa _{2}}{\kappa _{1}}\dfrac{%
1}{\sin ^{2}\left( \dint \kappa _{2}(s)ds\right) }
\end{eqnarray*}%
\begin{equation*}
\tilde{\kappa}_{2}=\dfrac{m_{1}(m_{1}^{\prime }m_{2}-m_{1}m_{2}^{\prime })}{%
m_{1}^{2}+m_{2}^{2}}.
\end{equation*}
\end{proof}

\section{The relation between the curve $\protect\alpha $ and focal curve}

Let $\alpha :I\subset \mathbb{R}\rightarrow E^{n}$ be a unit speed Frenet
curve. Suppose that all Euclidean curvatures of the curve $\alpha $ are
nonzero for any $s\in I.$ The curve $C_{\alpha }:I\subset \mathbb{R}%
\rightarrow \mathbb{E}^{n}$ consisting of the centers of the osculating
spheres of the curve $\alpha $ is called the \textit{focal curve }of $\alpha
.$ Then the focal curve $C_{\alpha }$ has a representation:%
\begin{equation}
C_{\alpha }=\alpha
(s)+f_{1}(s)V_{2}(s)+...+f_{n-2}(s)V_{n-1}(s)+f_{n-1}(s)V_{n}(s)  \label{5}
\end{equation}%
where the functions $f_{i}(s),$ $i=1,...,n-1$ are called focal curvature of
the curve $\alpha .$ In \cite{Uribe}$,$ R. Uribe-Vargas found equation
between Euclidean curvatures and focal curvatures. Then by using this
equation we can express the shape curvatures of $\alpha $ in terms of focal
curvatures.

\begin{proposition}
Let $\alpha :I\subset \mathbb{R}\rightarrow E^{n}$ be a space curve with all
Euclidean curvatures different from zero. Then,\newline
\begin{equation}  \label{*}
\end{equation}%
\begin{eqnarray*}
\tilde{\kappa} &=&\frac{d}{ds}\left( \frac{f_{i-2}f_{i-1}f_{i}}{\sqrt{\left(
\left( f_{1}f_{1}^{\prime }+\text{ }f_{2}f_{2}^{\prime }+\cdots
+f_{i-2}f_{i-2}^{\prime }\right) f_{i}\right) ^{2}+\left( \left(
f_{1}f_{1}^{\prime }+\text{ }f_{2}f_{2}^{\prime }+\cdots
+f_{i-1}f_{i-1}^{\prime }\right) f_{i-2}\right) ^{2}}}\right) \text{ ,} \\
\tilde{\kappa}_{j} &=&\frac{f_{i-2}f_{i-1}f_{i}}{f_{j-1}f_{j}}\frac{%
f_{1}f_{1}^{\prime }+\text{ }f_{2}f_{2}^{\prime }+\cdots
+f_{j-1}f_{j-1}^{\prime }}{\sqrt{\left( \left( f_{1}f_{1}^{\prime }+\text{ }%
f_{2}f_{2}^{\prime }+\cdots +f_{i-2}f_{i-2}^{\prime }\right) f_{i}\right)
^{2}+\left( \left( f_{1}f_{1}^{\prime }+\text{ }f_{2}f_{2}^{\prime }+\cdots
+f_{i-1}f_{i-1}^{\prime }\right) f_{i-2}\right) ^{2}}}\text{ ,}
\end{eqnarray*}%
\begin{equation*}
j=1,...,n-1.
\end{equation*}
\end{proposition}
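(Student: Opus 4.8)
The plan is to prove the proposition by pure substitution, feeding the Uribe-Vargas relation between the Euclidean and the focal curvatures into the definition of the shape curvatures. Recall first that, written in terms of the arc length $s$, the shape curvatures attached to the $V_{i}$-indicatrix are
\begin{equation*}
\tilde{\kappa}=\frac{d}{ds}\!\left(\frac{1}{\sqrt{\kappa_{i-1}^{2}+\kappa_{i}^{2}}}\right),\qquad \tilde{\kappa}_{j}=\frac{\kappa_{j}}{\sqrt{\kappa_{i-1}^{2}+\kappa_{i}^{2}}},\qquad j=1,\dots ,n-1,
\end{equation*}
so everything reduces to rewriting $\sqrt{\kappa_{i-1}^{2}+\kappa_{i}^{2}}$ and $\kappa_{j}$ in terms of the focal curvatures $f_{1},\dots ,f_{n-1}$. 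For the latter I use the formula of \cite{Uribe} (which is precisely the statement that $C_{\alpha}^{\prime}$ is a multiple of $V_{n}$): under the standing hypothesis that all Euclidean curvatures are nonzero — so that the focal curve and all the $f_{k}$ are well defined — one has $\kappa_{1}=1/f_{1}$ and, for $k\ge 2$,
\begin{equation*}
\kappa_{k}=\frac{f_{1}f_{1}^{\prime}+f_{2}f_{2}^{\prime}+\cdots +f_{k-1}f_{k-1}^{\prime}}{f_{k-1}f_{k}}\,.
\end{equation*}

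The main step is to specialise this to $k=i-1$ and $k=i$ and to collapse $\kappa_{i-1}^{2}+\kappa_{i}^{2}$ over a common denominator. Abbreviating $A=f_{1}f_{1}^{\prime}+\cdots +f_{i-2}f_{i-2}^{\prime}$ and $B=f_{1}f_{1}^{\prime}+\cdots +f_{i-1}f_{i-1}^{\prime}$ one gets $\kappa_{i-1}=A/(f_{i-2}f_{i-1})$ and $\kappa_{i}=B/(f_{i-1}f_{i})$, whence
\begin{equation*}
\kappa_{i-1}^{2}+\kappa_{i}^{2}=\frac{(Af_{i})^{2}+(Bf_{i-2})^{2}}{f_{i-2}^{2}f_{i-1}^{2}f_{i}^{2}},\qquad\text{so}\qquad \frac{1}{\sqrt{\kappa_{i-1}^{2}+\kappa_{i}^{2}}}=\frac{f_{i-2}f_{i-1}f_{i}}{\sqrt{(Af_{i})^{2}+(Bf_{i-2})^{2}}},
\end{equation*}
with the positive branch of the square root, as dictated by the Frenet normalisation. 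Since $A$ and $B$ are exactly the two inner sums appearing under the radical in the statement, this already identifies the focal-curvature form of $1/\sqrt{\kappa_{i-1}^{2}+\kappa_{i}^{2}}$.

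It then remains to insert this into the two displays above. For $\tilde{\kappa}$ the substitution is immediate and produces $\tilde{\kappa}=\frac{d}{ds}\big(f_{i-2}f_{i-1}f_{i}/\sqrt{(Af_{i})^{2}+(Bf_{i-2})^{2}}\big)$, which is the first claimed identity. For $\tilde{\kappa}_{j}$ I additionally replace $\kappa_{j}$ by $(f_{1}f_{1}^{\prime}+\cdots +f_{j-1}f_{j-1}^{\prime})/(f_{j-1}f_{j})$ and collect the common factor $f_{i-2}f_{i-1}f_{i}/(f_{j-1}f_{j})$, which gives the second identity. Once Uribe-Vargas's formula is available the argument is entirely formal; the one place deserving attention — and the step I expect to be the only real obstacle — is the common-denominator simplification of $\kappa_{i-1}^{2}+\kappa_{i}^{2}$ and the accompanying bookkeeping of the signs of the $f_{k}$ and of which square-root branch is taken. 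I would also note that for $i=1$ the formulas degenerate (there is no $\kappa_{0}$) and collapse to $\tilde{\kappa}=(1/\kappa_{1})^{\prime}$ and $\tilde{\kappa}_{j}=\kappa_{j}/\kappa_{1}$, as already observed, and that for small $k$ one uses the convention $f_{0}=1$; thus the displayed identities are to be read for those $i$ and $j$ for which $f_{i-2},f_{i-1},f_{i},f_{j-1}$ are meaningful.
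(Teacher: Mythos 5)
Your proposal is correct and follows essentially the same route as the paper: substitute the Uribe--Vargas relations $\kappa_{k}=\bigl(f_{1}f_{1}^{\prime}+\cdots+f_{k-1}f_{k-1}^{\prime}\bigr)/(f_{k-1}f_{k})$ into the definition of the shape curvatures and simplify $\kappa_{i-1}^{2}+\kappa_{i}^{2}$ over the common denominator $f_{i-2}^{2}f_{i-1}^{2}f_{i}^{2}$. In fact the paper's proof is only the one-line citation of this substitution, so your explicit common-denominator computation (and the remarks on signs, square-root branch, and the degenerate case $i=1$) simply fills in the details the paper leaves implicit.
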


\begin{proof}
According to the first two theorems in \cite{Uribe}, there are relations
between the Frenet curvatures and focal curvatures as follows:

\begin{equation}
\kappa _{i}=\dfrac{f_{1}f_{1}^{\prime }+\text{ }f_{2}f_{2}^{\prime }+\cdots
+f_{i-1}f_{i-1}^{\prime }}{f_{i-1}f_{i}},\text{ \ \ \ \ }i=2,3,...,n-1.
\label{11}
\end{equation}%
By using the Eq. $\left( \ref{10}\right) $ and Eq. $\left( \ref{11}\right) $
we obtain the Eq. $\left( 5.3\right) .$
\end{proof}

\begin{remark}
For $i=1$ $($that is $\sigma _{i}=\sigma ),$ the representation of the shape
curvatures is given with Eq.$\left( \ref{*}\right) $ the same as
representation of the shape curvatures is given with Eq. $(4.2)$ in \cite%
{Rad2}.
\end{remark}

\section{Self-Similar Frenet Curves}

The curve $\alpha :I\subset \mathbb{R}\rightarrow E^{n}$ is called
self-similar if and only if all its invariants $\tilde{\kappa},\tilde{\kappa}%
_{1},...,$ $\tilde{\kappa}_{n-1}$ are constant.

\begin{equation*}
K=\left( 
\begin{array}{cccccccc}
0 & \tilde{\kappa}_{1} & 0 & 0 & \cdots & 0 & 0 & 0 \\ 
-\tilde{\kappa}_{1} & 0 & \tilde{\kappa}_{2} & 0 & \cdots & 0 & 0 & 0 \\ 
0 & -\tilde{\kappa}_{2} & 0 & \tilde{\kappa}_{3} & \cdots & 0 & 0 & 0 \\ 
\vdots & \vdots & \vdots & \vdots & \ddots & \vdots & \vdots & \vdots \\ 
0 & 0 & 0 & 0 & \cdots & -\tilde{\kappa}_{n-2} & 0 & \tilde{\kappa}_{n-1} \\ 
0 & 0 & 0 & 0 & \cdots & 0 & -\tilde{\kappa}_{n-1} & 0%
\end{array}%
\right)
\end{equation*}%
According to \cite{Greub}, the normal form of the matrix $K$ is given
following as 
\begin{equation*}
\left( 
\begin{array}{cccccccc}
0 & \lambda _{1} & 0 & 0 & \cdots & 0 & 0 & 0 \\ 
-\lambda _{1} & 0 & 0 & 0 & \cdots & 0 & 0 & 0 \\ 
0 & 0 & 0 & \lambda _{2} & \cdots & 0 & 0 & 0 \\ 
0 & 0 & -\lambda _{2} & 0 & \cdots & 0 & 0 & 0 \\ 
\vdots & \vdots & \vdots & \vdots & \ddots & \vdots & \vdots & \vdots \\ 
0 & 0 & 0 & 0 & \cdots & 0 & 0 & \lambda _{m} \\ 
0 & 0 & 0 & 0 & \cdots & 0 & -\lambda _{m} & 0%
\end{array}%
\right)
\end{equation*}%
Then the symetric matrix $K^{2}$ has $m=\dfrac{n}{2}$ negative eigenvalues
with multiplicity two: $-\lambda _{1}^{2},$ $-\lambda _{2}^{2},$...,$%
-\lambda _{m}^{2}.$

\subsection{Self- Similar curves in Even-Dimensional Euclidean Spaces \ \ \
\ \ \ \ \ \ \ \ \ \ \ \ \ \ \ \ \ \ \ \ \ \ \ \ \ \ \ \ \ }

In this section, we deal with self-similar curves. We can write a curve if
we have constant shape curvatures of the curve. Besides, the curve $\alpha $
can be written arc length parameter $\sigma _{i}$ of its $V_{i}-$indicatrix
curve.

\begin{theorem}
Let $\alpha :I\ \rightarrow \mathbb{R}^{2m}\ $be a self-similar curve. The
curve $\alpha $ parameterized by the general arc-length parameter $\sigma
_{i}$ of its $V_{i}-$indicatrix curve. The curve $\alpha $ can be written,%
\begin{equation}
\alpha \left( \sigma _{i}\right) =\left( \frac{a_{1}}{b_{1}}e^{\tilde{\kappa}%
\sigma _{i}}\sin \theta _{1},-\frac{a_{1}}{b_{1}}e^{\tilde{\kappa}\sigma
_{i}}\cos \theta _{1},\cdots ,\frac{a_{m}}{b_{m}}e^{\tilde{\kappa}\sigma
_{i}}\sin \theta _{m},-\frac{a_{m}}{b_{m}}e^{\tilde{\kappa}\sigma _{i}}\cos
\theta _{m}\right)  \label{12}
\end{equation}%
where $b_{j}=\sqrt{\tilde{\kappa}^{2}+\lambda _{j}^{2}}$ $\ \ \ ,$ $\ \
\theta _{j}=\sqrt{\lambda _{j}^{2}\sigma _{i}}+\arccos \left( \dfrac{\sqrt{%
\lambda _{j}^{2}}}{\sqrt{\tilde{\kappa}^{2}+\lambda _{j}^{2}}}\right) $ for $%
j=1,2,\cdots ,m$ and the real different nonzero numbers $a_{1},a_{2},\cdots
,a_{m}$ are solution of the system%
\begin{equation*}
\langle V_{j},V_{j}\rangle =1,\ \ j=1,2,\cdots ,m.
\end{equation*}
\end{theorem}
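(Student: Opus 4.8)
The plan is to integrate the structure equations \eqref{8}–\eqref{9} for a self-similar curve, where all shape curvatures $\tilde\kappa,\tilde\kappa_1,\dots,\tilde\kappa_{n-1}$ are constant. First I would set $W_k(\sigma_i)=\frac{1}{\sqrt{(\kappa_{i-1})^2+(\kappa_i)^2}}V_k(s)$ for $k=1,\dots,n$, so that \eqref{9} reads $\frac{d}{d\sigma_i}(W_1,\dots,W_n)^T=\widetilde K\,(W_1,\dots,W_n)^T$ with $\widetilde K$ the constant matrix $\tilde\kappa I + K$, where $K$ is the constant skew-symmetric tridiagonal matrix displayed in Section 6. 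Since $\widetilde K$ is constant, the solution is $W(\sigma_i)=e^{\sigma_i\widetilde K}W(0)=e^{\tilde\kappa\sigma_i}e^{\sigma_i K}W(0)$, so every entry carries the scalar factor $e^{\tilde\kappa\sigma_i}$ and the remaining motion is the rotation $e^{\sigma_i K}$.

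Next I would diagonalize (into real normal form) the skew-symmetric matrix $K$: by the cited result \cite{Greub}, in $\mathbb{R}^{2m}$ there is an orthogonal change of frame putting $K$ into block-diagonal form with $2\times2$ blocks $\begin{pmatrix}0&\lambda_j\\-\lambda_j&0\end{pmatrix}$, $j=1,\dots,m$, where $-\lambda_j^2$ are the eigenvalues of $K^2$. In these adapted coordinates $e^{\sigma_i K}$ acts on the $j$-th $2$-plane as rotation by angle $\lambda_j\sigma_i$, so each pair of components of $W$ becomes $e^{\tilde\kappa\sigma_i}$ times $(\cos(\lambda_j\sigma_i+\varphi_j),\sin(\lambda_j\sigma_i+\varphi_j))$ up to amplitude; the phase shift $\varphi_j=\arccos\!\big(\sqrt{\lambda_j^2}/\sqrt{\tilde\kappa^2+\lambda_j^2}\big)$ and the normalization $b_j=\sqrt{\tilde\kappa^2+\lambda_j^2}$ come from matching initial conditions and from the constraint that $W_1=\frac{1}{\sqrt{(\kappa_{i-1})^2+(\kappa_i)^2}}\,\frac{d\alpha}{d\sigma_i}$ via \eqref{7}. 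Then I would integrate $\frac{d\alpha}{d\sigma_i}=\sqrt{(\kappa_{i-1})^2+(\kappa_i)^2}\,W_1(\sigma_i)$; since $W_1$ is a sum of terms of the form $e^{\tilde\kappa\sigma_i}(\text{trig in }\lambda_j\sigma_i)$, the antiderivative is again of that form, producing the factors $\frac{a_j}{b_j}e^{\tilde\kappa\sigma_i}$ in front of $\sin\theta_j,\ -\cos\theta_j$ in \eqref{12}. Finally the amplitudes $a_j$ are fixed (up to sign) by imposing $\langle V_j,V_j\rangle=1$, i.e.\ that the frame obtained is genuinely a Frenet frame; this yields the stated linear system.

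The main obstacle I expect is the bookkeeping in the second step: tracking how the orthogonal conjugation that normalizes $K$ interacts with the initial values $W_k(0)$ and with the distinguished role of $W_1$ (which must equal a unit tangent up to the scalar factor). One has to check that the amplitude and phase constants $a_j,b_j,\varphi_j,\theta_j$ coming out of the integration are mutually consistent with $\|\alpha'(\sigma_i)\|$ and with the orthonormality relations among the $V_k$ — i.e.\ that the $m$ equations $\langle V_j,V_j\rangle=1$ are exactly what pins down the $a_j$, and that no further constraint is needed. A secondary technical point is handling the case $\tilde\kappa=0$ separately (pure rotation, no exponential growth) and confirming the formula degenerates correctly; and, in odd dimensions, $K$ would also have a zero eigenvalue, which is why the theorem is stated in $\mathbb{R}^{2m}$. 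Everything else — exponentiating a constant matrix, integrating exponentials times trigonometric functions — is routine once the normal form is in hand.
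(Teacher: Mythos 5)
Your approach is essentially the paper's own: the paper likewise solves the constant-coefficient Frenet system $\frac{d}{d\sigma _{i}}\omega =K\omega $ using the real normal form of the constant skew-symmetric matrix from \cite{Greub}, notes that $\sqrt{\kappa _{i-1}^{2}+\kappa _{i}^{2}}=e^{-\tilde{\kappa}\sigma _{i}}$ so the curve's derivative is $e^{\tilde{\kappa}\sigma _{i}}$ times a rotating unit vector, integrates the exponential-times-trigonometric components to obtain $\left( \ref{12}\right) $, and pins down the $a_{j}$ via $\langle V_{j},V_{j}\rangle =1$; your packaging of the scalar factor into $\widetilde{K}=\tilde{\kappa}I+K$ for the rescaled frame is the same computation in different bookkeeping. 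One small slip worth fixing: by $\left( \ref{7}\right) $ one has $\frac{d\alpha }{d\sigma _{i}}=W_{1}$ itself, not $\sqrt{\left( \kappa _{i-1}\right) ^{2}+\left( \kappa _{i}\right) ^{2}}\,W_{1}$ (which would be the unit vector $V_{1}$), although your subsequent integration of $e^{\tilde{\kappa}\sigma _{i}}$-times-trigonometric terms is consistent with the correct relation.
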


\begin{proof}
We express in the form of a column vector $\omega (\sigma
_{i})=(V_{1}(\sigma _{i}),V_{2}(\sigma _{i}),\cdots ,V_{2m-1}(\sigma
_{i}),V_{2m}(\sigma _{i}))$ of unit vector fields $\left \{ V_{1}(\sigma
_{i}),V_{2}(\sigma _{i}),\cdots ,V_{2m-1}(\sigma _{i}),V_{2m}(\sigma
_{i})\right \} $. From the solution of the differential equation $\dfrac{d}{%
d\sigma _{i}}\omega ={\small K}\omega ,$ we can calculate the unit vector
fields $\left \{ V_{1}(\sigma _{i}),V_{2}(\sigma _{i}),\cdots
,V_{2m-1}(\sigma _{i}),V_{2m}(\sigma _{i})\right \} $. Also, the first unit
vector has%
\begin{equation*}
V_{1}(\sigma _{i})=(a_{1}\cos (\lambda _{1}\sigma _{i}),a_{1}\sin (\lambda
_{1}\sigma _{i}),\cdots a_{m}\cos (\lambda _{m}\sigma _{i}),a_{_{m}}\sin
(\lambda _{m}\sigma _{i})).
\end{equation*}%
Because of $\langle V_{1},V_{1}\rangle =1,$ we obtain that $\tsum
\limits_{j=1}^{m}(a_{j})^{2}=1.$

The parametric equation of the curve $\alpha $ is given by $X=(x_{1}(\sigma
),x_{2}(\sigma ),\cdots ,x_{2m-1}(\sigma ),x_{2m}(\sigma )).$ Then the Eq. $%
\left( \ref{6}\right) $ can write $\dfrac{d}{d\sigma _{i}}X=\dfrac{1}{\sqrt{%
\kappa _{i-1}^{2}+\kappa _{i}^{2}}}V_{1}.$ We can see easily that $\sqrt{%
\kappa _{i-1}^{2}+\kappa _{i}^{2}}=e^{-\tilde{\kappa}\sigma _{i}}$. Hence,
we have%
\begin{equation*}
\dfrac{d}{d\sigma _{i}}x_{2j-1}=a_{j}e^{\tilde{\kappa}\sigma }\cos (\lambda
_{j}\sigma _{i})\text{ \ \ \ and \ \ \ \ }\dfrac{d}{d\sigma _{i}}%
x_{2j}=a_{j}e^{\tilde{\kappa}\sigma _{i}}\sin (\lambda _{j}\sigma _{i}),%
\text{ \ \ for \ \ }j=1,2,\cdots ,m.
\end{equation*}
Integrating the last equations, we obtain%
\begin{equation}
x_{2j-1}=\frac{a_{j}}{\tilde{\kappa}}e^{^{\tilde{\kappa}_{1}\sigma
_{i}}}\cos (\lambda _{j}\sigma _{i})+\frac{\lambda _{j}}{\tilde{\kappa}}%
x_{2j}  \label{13}
\end{equation}%
\begin{equation}
x_{2j}=\frac{a_{j}}{\tilde{\kappa}}e^{^{\tilde{\kappa}_{1}\sigma _{i}}}\sin
(\lambda _{j}\sigma _{i})-\frac{\lambda _{i}}{\tilde{\kappa}}x_{\substack{ %
2j-1  \\ .}}.  \label{14}
\end{equation}%
Using the Eq.$\left( \ref{13}\right) $ and Eq.$\left( \ref{14}\right) ,$ we
get two equations:%
\begin{eqnarray*}
x_{2j-1} &=&\frac{a_{j}}{\ \tilde{\kappa}^{2}+\lambda _{j}^{2}}e^{^{\tilde{%
\kappa}\sigma _{i}}}(\tilde{\kappa}\cos (\lambda _{j}\sigma _{i})+\lambda
_{j}\sin (\lambda _{j}\sigma _{i}))=\frac{a_{j}}{b_{j}}e^{\tilde{\kappa}%
\sigma _{i}}\sin \theta _{j}, \\
x_{2j} &=&\frac{a_{j}}{\ \tilde{\kappa}^{2}+\lambda _{j}^{2}}e^{^{\tilde{%
\kappa}\sigma _{i}}}(\tilde{\kappa}\sin (\lambda _{j}\sigma _{i})-\lambda
_{j}\cos (\lambda _{j}\sigma _{i}))=-\frac{a_{j}}{b_{j}}e^{\tilde{\kappa}%
\sigma _{i}}\cos \theta _{j}
\end{eqnarray*}%
where%
\begin{eqnarray*}
\sin \theta _{j} &=&\dfrac{b_{j}}{\tilde{\kappa}}(\cos (\lambda _{j}\sigma
_{i})-\dfrac{\lambda _{j}}{b_{j}}\cos \theta _{j}), \\
\cos \theta _{j} &=&-\dfrac{b_{j}}{\tilde{\kappa}}(\sin (\lambda _{j}\sigma
_{i})-\dfrac{\lambda _{j}}{b_{j}}\sin \theta _{j}).
\end{eqnarray*}%
From here, $\theta _{j}$ can be found%
\begin{equation*}
\theta _{j}=\lambda _{j}\sigma _{i}+\arccos \frac{\lambda _{j}}{\sqrt{\tilde{%
\kappa}^{2}+\lambda _{j}^{2}}}.
\end{equation*}%
We can write from the equation $\dfrac{d}{d\sigma _{i}}\varepsilon =${\small 
$K$}$\varepsilon $%
\begin{eqnarray*}
V_{1}(\sigma _{i}) &=&e^{-\tilde{\kappa}\sigma _{i}}\frac{d}{d\sigma _{i}}%
\alpha (\sigma _{i}) \\
V_{2}(\sigma _{i}) &=&\frac{1}{\tilde{\kappa}_{1}}\frac{d}{d\sigma _{i}}%
V_{1}(\sigma _{i}) \\
V_{3}(\sigma _{i}) &=&\frac{1}{\tilde{\kappa}_{2}}(-\tilde{k}%
_{1}V_{1}(\sigma _{i})+\frac{d}{d\sigma _{i}}V_{2}(\sigma _{i})) \\
V_{4}(\sigma _{i}) &=&\frac{1}{\tilde{\kappa}_{3}}(\tilde{\kappa}%
_{2}V_{2}(\sigma _{i})+\frac{d}{d\sigma _{i}}V_{3}(\sigma _{i})) \\
&&\vdots \\
V_{m}(\sigma _{i}) &=&\frac{1}{\tilde{\kappa}_{m-1}}(\tilde{\kappa}%
_{m-2}V_{m-2}(\sigma _{i})+\frac{d}{d\sigma _{i}}V_{m-1}(\sigma _{i})).
\end{eqnarray*}%
So, we show that by using an algebric calculus 
\begin{eqnarray*}
\langle V_{1},V_{1}\rangle &=&1\Rightarrow \tsum \limits_{j=1}^{m}a_{j}^{2}=1
\\
\langle V_{2},V_{2}\rangle &=&1\Rightarrow \tsum
\limits_{j=1}^{m}a_{j}^{2}\lambda _{j}^{2}=\tilde{\kappa}_{1}^{2} \\
\langle V_{3},V_{3}\rangle &=&1\Rightarrow \tsum
\limits_{j=1}^{m}a_{j}^{2}(1-\lambda _{j}^{2})^{2}=\tilde{\kappa}_{2}^{2} \\
\langle V_{4},V_{4}\rangle &=&1\Rightarrow \tsum
\limits_{j=1}^{m}a_{j}^{2}\lambda _{j}^{2}(\tilde{\kappa}_{2}^{2}-\lambda
_{j}^{2})^{2}=\tilde{\kappa}_{2}^{2}\tilde{\kappa}_{3}^{2}\text{ \ \ and so
on.}
\end{eqnarray*}%
The proof is completed.
\end{proof}

\begin{remark}
For $i=1$ $($that is $\sigma _{i}=\sigma ),$ the representation of the curve 
$\alpha $ in even-dimensional Euclidean spaces is given with Eq.$\left( \ref%
{12}\right) $ the same as representation of the curve $\alpha $ is given
with Eq. $(5.2)$ in \cite{Rad2}.
\end{remark}

\subsection{Self-Similar Curves in Odd-Dimensional Euclidean Spaces \ \ \ \
\ \ \ \ \ \ \ \ \ \ \ \ \ \ \ \ \ \ \ \ \ \ \ \ \ \ \ \ \ \ \ \ \ \ \ \ \ \
\ }

We can also express self-similar curves in $\mathbb{R}^{2m+1}.$

\begin{theorem}
Let $\alpha :I\ \rightarrow \mathbb{R}^{2m+1}\ $be a self-similar curve. The
curve $\alpha $ parameterized by the general arc-length parameter $\sigma
_{i}$ of its $V_{i}-$indicatrix curve. The curve $\alpha $ can be written,
\end{theorem}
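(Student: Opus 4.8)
The plan is to follow the argument of the even-dimensional theorem almost verbatim, the one genuinely new ingredient being that a skew-symmetric endomorphism of an \emph{odd}-dimensional space always has a nontrivial kernel. First I would apply \cite{Greub} to the constant matrix $K$: on $\mathbb{R}^{2m+1}$ its normal form consists of $m$ elementary $2\times 2$ rotation blocks with angular speeds $\lambda_1,\dots,\lambda_m$ (so that $K^2$ has the negative eigenvalues $-\lambda_1^2,\dots,-\lambda_m^2$, each of multiplicity two) together with one extra $1\times 1$ zero block spanning $\ker K$. In the resulting orthonormal basis the linear system $\frac{d}{d\sigma_i}\omega=K\omega$ splits into $m$ planar harmonic oscillators plus one trivial equation, and choosing the solution for which $V_1$ is a unit vector gives
\begin{equation*}
V_1(\sigma_i)=\left(a_1\cos(\lambda_1\sigma_i),a_1\sin(\lambda_1\sigma_i),\dots,a_m\cos(\lambda_m\sigma_i),a_m\sin(\lambda_m\sigma_i),a_{m+1}\right),
\end{equation*}
where the constant last component $a_{m+1}$ is the new feature and $\langle V_1,V_1\rangle=1$ now reads $\sum_{j=1}^{m}a_j^2+a_{m+1}^2=1$.

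Next I would integrate the structure equation $\frac{d}{d\sigma_i}\alpha=\frac{1}{\sqrt{\kappa_{i-1}^2+\kappa_i^2}}\,V_1$. As before, constancy of $\tilde{\kappa}$ forces $\sqrt{\kappa_{i-1}^2+\kappa_i^2}=e^{-\tilde{\kappa}\sigma_i}$, so each coordinate obeys $\frac{d}{d\sigma_i}x_k=e^{\tilde{\kappa}\sigma_i}(V_1)_k$. For the paired coordinates $x_{2j-1},x_{2j}$ the calculation is the one already carried out in the even case — integrate, eliminate between the two resulting relations, and pass to amplitude--phase form — producing $x_{2j-1}=\frac{a_j}{b_j}e^{\tilde{\kappa}\sigma_i}\sin\theta_j$ and $x_{2j}=-\frac{a_j}{b_j}e^{\tilde{\kappa}\sigma_i}\cos\theta_j$ with $b_j=\sqrt{\tilde{\kappa}^2+\lambda_j^2}$ and $\theta_j=\lambda_j\sigma_i+\arccos(\lambda_j/\sqrt{\tilde{\kappa}^2+\lambda_j^2})$. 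The extra coordinate is immediate: $\frac{d}{d\sigma_i}x_{2m+1}=a_{m+1}e^{\tilde{\kappa}\sigma_i}$ integrates to $x_{2m+1}=\frac{a_{m+1}}{\tilde{\kappa}}e^{\tilde{\kappa}\sigma_i}$ (this uses $\tilde{\kappa}\neq 0$, exactly as the even-dimensional proof does when it divides by $\tilde{\kappa}$). Collecting the coordinates then gives the asserted closed form for $\alpha(\sigma_i)$: the even-dimensional expression augmented by the last entry $\frac{a_{m+1}}{\tilde{\kappa}}e^{\tilde{\kappa}\sigma_i}$.

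Finally I would determine the amplitudes. Running the Frenet recursion $V_{k+1}=\frac{1}{\tilde{\kappa}_k}(\tilde{\kappa}_{k-1}V_{k-1}+\frac{d}{d\sigma_i}V_k)$ from the above $V_1$ and imposing $\langle V_k,V_k\rangle=1$ at each stage yields, exactly as in the even case, a triangular polynomial system in $a_1^2,\dots,a_m^2,a_{m+1}^2$ whose first equations are $\sum a_j^2+a_{m+1}^2=1$, $\sum a_j^2\lambda_j^2=\tilde{\kappa}_1^2$, $\sum a_j^2(1-\lambda_j^2)^2=\tilde{\kappa}_2^2$, and so on, with $\ker K$ accounting for the last Frenet vector $V_{2m+1}$; this fixes the $a_j$ up to the expected sign and ordering ambiguities. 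The step I expect to be the real obstacle is the bookkeeping at the tail of this chain: one must verify that in dimension $2m+1$ the number of unit-length conditions, together with the kernel direction, matches the number of unknown amplitudes, so that the system is consistent and the vectors $V_1,\dots,V_{2m+1}$ genuinely close up into a Frenet frame of the curve just constructed. The normal-form reduction and the paired-coordinate integrations are routine once the even-dimensional theorem is in hand.
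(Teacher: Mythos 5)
Your proposal is correct and follows essentially the same route as the paper, which simply states that the odd-dimensional proof is the same as the even-dimensional one; you carry that out, with the kernel direction of the skew-symmetric matrix $K$ supplying the extra constant component of $V_{1}$, exactly as intended. The only cosmetic difference is normalization: the paper absorbs the factor $1/\tilde{\kappa}$ from the last integration into the constant, writing the final coordinate as $a_{m+1}e^{\tilde{\kappa}\sigma _{i}}$ with the condition $\sum_{j=1}^{m}a_{j}^{2}+\tilde{\kappa}^{2}a_{m+1}^{2}=1$, whereas you keep $\frac{a_{m+1}}{\tilde{\kappa}}e^{\tilde{\kappa}\sigma _{i}}$ with $\sum_{j=1}^{m}a_{j}^{2}+a_{m+1}^{2}=1$, which is the same statement after rescaling $a_{m+1}$.
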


\begin{equation}
\alpha \left( \sigma _{i}\right) =\left( \frac{a_{1}}{b_{1}}e^{\tilde{\kappa}%
\sigma _{i}}\sin \theta _{1},-\frac{a_{1}}{b_{1}}e^{\tilde{\kappa}\sigma
_{i}}\cos \theta _{1},\cdots ,\frac{a_{m}}{b_{m}}e^{\tilde{\kappa}\sigma
_{i}}\sin \theta _{m},-\frac{a_{m}}{b_{m}}e^{\tilde{\kappa}\sigma _{i}}\cos
\theta _{m},a_{m+1}e^{\tilde{\kappa}\sigma _{i}}\right)  \label{15}
\end{equation}%
where $b_{j}=\sqrt{\tilde{\kappa}^{2}+\lambda _{j}^{2}}$ $\ \ \ ,$ $\ \
\theta _{j}=\sqrt{\lambda _{j}^{2}\sigma _{i}}+\arccos \left( \dfrac{\sqrt{%
\lambda _{j}^{2}}}{\sqrt{\tilde{\kappa}^{2}+\lambda _{j}^{2}}}\right) $ for $%
j=1,2,\cdots ,m$ and the real different nonzero numbers $a_{1},a_{2},\cdots
,a_{m+1}.$ To calculate $a_{1},a_{2},\cdots ,a_{m+1}$ we use these $m+1$
equations%
\begin{eqnarray*}
V_{1}(\sigma _{i}) &=&e^{-\tilde{\kappa}\sigma _{i}}\frac{d}{d\sigma _{i}}%
\alpha (\sigma _{i}) \\
V_{2}(\sigma _{i}) &=&\frac{1}{\tilde{\kappa}_{1}}\frac{d}{d\sigma _{i}}%
V_{1}(\sigma _{i}) \\
V_{3}(\sigma _{i}) &=&\frac{1}{\tilde{\kappa}_{2}}(-\tilde{\kappa}%
_{1}V_{1}(\sigma _{i})+\frac{d}{d\sigma _{i}}V_{2}(\sigma _{i})) \\
V_{4}(\sigma _{i}) &=&\frac{1}{\tilde{\kappa}_{3}}(\tilde{\kappa}%
_{2}V_{2}(\sigma _{i})+\frac{d}{d\sigma _{i}}V_{3}(\sigma _{i})) \\
&&\vdots \\
V_{m+1}(\sigma _{i}) &=&\frac{1}{\tilde{\kappa}_{m}}(\tilde{\kappa}%
_{m-1}V_{m-1}(\sigma _{i})+\frac{d}{d\sigma _{i}}V_{m}(\sigma _{i})).
\end{eqnarray*}%
\begin{equation*}
\langle V_{j},V_{j}\rangle =1,\ \ j=1,2,\cdots ,m+1.
\end{equation*}%
From here%
\begin{eqnarray*}
\langle V_{1},V_{1}\rangle &=&1\Rightarrow \tsum \limits_{i=1}^{m}a_{i}^{2}+%
\tilde{\kappa}^{2}a_{m+1}^{2}=1 \\
\langle V_{2},V_{2}\rangle &=&1\Rightarrow \tsum
\limits_{i=1}^{m+1}a_{i}^{2}\lambda _{i}^{2}=\tilde{\kappa}_{1}^{2} \\
\langle V_{3},V_{3}\rangle &=&1\Rightarrow \tsum
\limits_{i=1}^{m}a_{i}^{2}(1-\frac{\lambda _{i}^{2}}{\tilde{\kappa}_{1}^{2}}%
)^{2}+\tilde{\kappa}^{2}a_{m+1}^{2}=\frac{\tilde{\kappa}_{2}^{2}}{\tilde{%
\kappa}_{1}^{2}} \\
\langle V_{i},V_{i}\rangle &=&1\ ,i=4,\ldots ,m+1,
\end{eqnarray*}

\begin{proof}
The proof is the same as the proof of Theorem 2.
\end{proof}

\begin{remark}
For $i=1$ $($that is $\sigma _{i}=\sigma ),$ the representation of the curve 
$\alpha $ in odd-dimensional Euclidean spaces is given with Eq.$\left( \ref%
{12}\right) $ the same as representation of the curve $\alpha $ is given
with Eq. $(5.2)$ in \cite{Rad2}.
\end{remark}

\begin{example}
Let be shape curvature functions $\tilde{\kappa}_{1}=\frac{3}{\sqrt{13}}$
and $\tilde{\kappa}_{2}=\frac{2}{\sqrt{13}}$ of the curve $\alpha $ in $%
\mathbb{R}^{3}.$ We can calculate the curve $\alpha $ (see Figure 2)
corresponding to them%
\begin{equation*}
\alpha \left( \sigma _{2}\right) =\left( \frac{a_{1}}{b_{1}}e^{\tilde{\kappa}%
\sigma _{2}}\sin \theta _{1},-\frac{a_{1}}{b_{1}}e^{\tilde{\kappa}\sigma
_{2}}\cos \theta _{1},a_{2}e^{\tilde{\kappa}\sigma _{2}}\right) 
\end{equation*}%
where $\sigma _{2}$ is arc length parameter of its normal indicatrix curve.
Then we use equations in odd dimensional space, an algebraic calculus shows
that%
\begin{equation*}
\lambda _{1}^{2}=\frac{5}{13},\text{ }a_{1}=\frac{3}{\sqrt{5}}\text{ and }%
a_{2}=\frac{2}{\sqrt{5}},\text{ }b_{1}=\frac{\sqrt{36+5s^{4}}}{\sqrt{13}s^{2}%
},\text{ \ }\theta _{1}=\sqrt{\frac{5}{13}}\sigma _{2}+\arccos \left( \frac{%
\sqrt{\frac{5}{13}}}{\sqrt{\frac{36}{13}\frac{1}{s^{4}}+\frac{5}{13}}}%
\right) .
\end{equation*}%
\begin{equation*}
\FRAME{itbpF}{2.93in}{2.22in}{0in}{}{}{Figure}{\special{language "Scientific
Word";type "GRAPHIC";maintain-aspect-ratio TRUE;display "USEDEF";valid_file
"T";width 2.93in;height 2.22in;depth 0in;original-width
3.7498in;original-height 2.8331in;cropleft "0";croptop "1";cropright
"1";cropbottom "0";tempfilename 'N20AIP00.wmf';tempfile-properties "XPR";}}
\end{equation*}%
\begin{equation*}
\text{Figure 2. The curve }\alpha .
\end{equation*}
\end{example}

\section{Geometric Interpretation of Shape Curvatures in 3-Euclidean Spaces}

In this section, we mentioned that geodesic curvatures of indicatrix curves
are related to shape curvatures of the curve.

\begin{proposition}
Geodesic curvatures of indicatrix curves are invariant under the group $%
Sim^{+}(\mathbb{R}^{3}).$
\end{proposition}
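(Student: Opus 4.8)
The plan is to compute the geodesic curvature of each spherical indicatrix curve $V_i(s)$ on the unit sphere $S^2 \subset E^3$ explicitly in terms of the Euclidean curvatures $\kappa_1, \kappa_2$ of $\alpha$, and then invoke the transformation law \eqref{4}, namely $\bar\kappa_i = \frac{1}{\lambda}\kappa_i$, together with $\kappa_i\,ds = \bar\kappa_i\,d\bar s$, to show the resulting expression is unchanged under a direct similarity. First I would recall that for a curve $\gamma$ lying on $S^2$ with unit normal $\gamma$ itself, the geodesic curvature is $\kappa_g = \langle \gamma'' , \gamma \times \gamma' \rangle / \|\gamma'\|^3$ where primes denote differentiation with respect to the ambient parameter $s$; equivalently, reparametrizing by the arc length $\sigma_i$ of the indicatrix, $\kappa_g = \langle \tfrac{d^2\gamma}{d\sigma_i^2}, \gamma \times \tfrac{d\gamma}{d\sigma_i}\rangle$.

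The key computational step is to carry this out for $\gamma = V_1$ (tangent indicatrix), $\gamma = V_2$ (principal normal indicatrix), and $\gamma = V_3$ (binormal indicatrix) using the Frenet equations \eqref{1}. For instance, for the tangent indicatrix $V_1' = \kappa_1 V_2$, so $\|V_1'\| = \kappa_1$, $d\sigma_1 = \kappa_1\,ds$, and one differentiates again to get $V_1'' = \kappa_1' V_2 + \kappa_1(-\kappa_1 V_1 + \kappa_2 V_3)$; then $V_1 \times V_1' = \kappa_1\, (V_1 \times V_2) = \kappa_1 V_3$, and the geodesic curvature comes out as a rational expression in $\kappa_1, \kappa_2$ and $\kappa_1'$ — in fact $\kappa_g^{(1)} = \kappa_2/\kappa_1 = \tilde\kappa_2$ (so it is literally one of the shape curvatures). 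I expect the analogous computations for $V_2$ and $V_3$ to yield expressions built from the quantities $\sqrt{\kappa_1^2+\kappa_2^2}$, its $s$-derivative divided by $\sqrt{\kappa_1^2+\kappa_2^2}$, and the ratios $\kappa_j/\sqrt{\kappa_1^2+\kappa_2^2}$ — that is, precisely the shape curvatures $\tilde\kappa, \tilde\kappa_j$ from Definition (3.2). This identification is really the point of the proposition and should be stated as the crux.

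The invariance then follows almost formally: under $F(x) = \lambda A x + b$ we have $\bar\kappa_i = \lambda^{-1}\kappa_i$ by \eqref{4}, hence every homogeneous-degree-zero ratio such as $\bar\kappa_2/\bar\kappa_1 = \kappa_2/\kappa_1$ is preserved, and since $d\bar s = \lambda\,ds$ the combination $\frac{d}{d\bar\sigma_i}\log\sqrt{\bar\kappa_{i-1}^2+\bar\kappa_i^2}$ equals $\frac{d}{d\sigma_i}\log\sqrt{\kappa_{i-1}^2+\kappa_i^2}$ as well (this is exactly the argument already used in the proof of Proposition \ref{Prop.1}). Because each geodesic curvature $\kappa_g^{(i)}$ has been written purely in terms of these scale-invariant combinations, $\bar\kappa_g^{(i)} = \kappa_g^{(i)}$, which is the assertion.

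The main obstacle will be the bookkeeping in the $V_2$ and $V_3$ computations: differentiating $V_2' = -\kappa_1 V_1 + \kappa_2 V_3$ twice and simplifying the triple product $\langle V_2'', V_2 \times V_2'\rangle / \|V_2'\|^3$ produces terms involving $\kappa_1', \kappa_2'$ and it takes care to see that everything organizes into the shape-curvature quantities rather than leaving a stray non-invariant term. A secondary subtlety is orientation — one must fix the sign convention for $\kappa_g$ consistently across the three indicatrices (and note that $V_i \times V_{i+1} = \pm V_{i\pm 1}$ depending on index parity in $E^3$) so that the stated formulas are internally consistent; but none of this affects the invariance conclusion, only the explicit formulas, so I would present the sign convention once at the outset and then proceed.
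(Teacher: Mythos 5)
Your proposal is correct and follows essentially the same route as the paper: compute the geodesic curvature of each of the three indicatrices (obtaining $\kappa_2/\kappa_1$, $\;(\kappa_1\kappa_2'-\kappa_1'\kappa_2)/(\kappa_1^2+\kappa_2^2)^{3/2}$, and $\kappa_1/\kappa_2$, i.e.\ expressions in the shape curvatures), and then conclude invariance from $\bar\kappa_i=\lambda^{-1}\kappa_i$ and $d\bar s=\lambda\,ds$ exactly as in Proposition~\ref{Prop.1}. The only difference is cosmetic: you use the triple-product formula $\kappa_g=\det(\gamma,\gamma',\gamma'')/\Vert\gamma'\Vert^{3}$ in the ambient parameter $s$, whereas the paper phrases the same computation via the Sabban frame in the indicatrix arc-length parameter $\sigma_i$.
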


\begin{proof}
Let $%
\begin{array}{ccccc}
\alpha & : & I & \rightarrow & \mathbb{R}^{3}%
\end{array}%
$ be a Frenet curve with $\left\{ V_{1},V_{2},V_{3},\kappa _{1},\kappa
_{2}\right\} $ Frenet apparatus and $\tilde{\kappa}_{1},$ $\tilde{\kappa}%
_{2} $ be shape curvatures$.$ Its indicatrix curve show that $%
\begin{array}{ccccc}
\gamma & : & I & \rightarrow & S^{2}%
\end{array}%
$ is a spherical curve with arc length parameter $\sigma _{i}$ ($i=1,2,3$).
of $\gamma .$ Let us denote $t(\sigma _{i})=\frac{d}{d\sigma _{i}}\gamma
(\sigma _{i})$ and we call $t(\sigma _{i})$ a unit tangent vector of $\gamma 
$. We now set a vector $\rho (\sigma _{i})=\gamma (\sigma _{i})\wedge
t(\sigma _{i})$ along the curve $\gamma $. This frame is called the Sabban
frame of $\gamma $ on $S^{2}$. Then we have the following spherical Frenet
formulae of $\gamma $%
\begin{equation}
\frac{d}{d\sigma _{i}}\left[ 
\begin{array}{c}
\gamma \\ 
t \\ 
\rho%
\end{array}%
\right] =\left[ 
\begin{array}{ccc}
0 & 1 & 0 \\ 
-1 & 0 & \kappa _{g} \\ 
0 & -\kappa _{g} & 0%
\end{array}%
\right] \left[ 
\begin{array}{c}
\gamma \\ 
t \\ 
\rho%
\end{array}%
\right]  \label{16}
\end{equation}%
where $\kappa _{g}=\det (\gamma ,t,\dfrac{dt}{d\sigma _{i}})$ is the
geodesic curvature of $\gamma $ at $\gamma (\sigma _{i}).$

Let $\bar{\alpha}$ be similar curve of $\alpha $ under the group $Sim^{+}(%
\mathbb{R}^{3})$ and $\left\{ \bar{V}_{1},\bar{V}_{2},\bar{V}_{3},\bar{\kappa%
}_{1},\bar{\kappa}_{2},\widetilde{\bar{\kappa}}_{1},\widetilde{\bar{\kappa}}%
_{2}\right\} $ be Frenet apparatus of the curve $\bar{\alpha}.$ Its
indicatrix curve $%
\begin{array}{ccccc}
\bar{\gamma} & : & I & \rightarrow & S^{2}%
\end{array}%
$ is a spherical curve with arc length parameter $\bar{\sigma}_{i}$ ($%
i=1,2,3 $). of $\bar{\gamma}.$ The orthogonal frame $\left\{ \bar{\gamma}(%
\bar{\sigma}_{i}),\bar{t}(\bar{\sigma}_{i}),\bar{\rho}(\bar{\sigma}%
_{i})\right\} $ along $\bar{\gamma}$ is called the Sabban frame of $\bar{%
\gamma}.$

If the curve $\gamma $ is a tangent indicatrix curve with arc length
parameter $\sigma _{1}$ then the following Frenet-Serret formulas hold%
\begin{equation*}
t(\sigma _{1})=\dfrac{d}{d\sigma _{1}}\gamma (\sigma _{1})
\end{equation*}%
\begin{equation*}
\dfrac{d}{d\sigma _{1}}t(\sigma _{1})=-\gamma (\sigma _{1})+\dfrac{\tilde{%
\kappa}_{2}}{\tilde{\kappa}_{1}}\rho (\sigma _{1}).
\end{equation*}%
So, the geodesic curvature $\kappa _{g}=\dfrac{\tilde{\kappa}_{2}}{\tilde{%
\kappa}_{1}}.$ Similarly, we calculate $\bar{\kappa}_{g}=\dfrac{\widetilde{%
\bar{\kappa}}_{2}}{\widetilde{\bar{\kappa}}_{1}}.$ Using the Eq.$\left( \ref%
{Prop.1}\right) ,$ we get $\bar{\kappa}_{g}=\kappa _{g}.$

If the curve $\gamma $ is a normal indicatrix curve with arc length
parameter $\sigma _{2}$ then the following Frenet-Serret formulas hold

\begin{eqnarray*}
t(\sigma _{2}) &=&\dfrac{d}{d\sigma _{2}}\gamma (\sigma _{2}) \\
\dfrac{d}{d\sigma _{2}}t(\sigma _{2}) &=&-\gamma (\sigma _{2})+\tilde{\kappa}%
_{1}^{2}\frac{d}{d\sigma _{2}}\left( \frac{\tilde{\kappa}_{2}}{\tilde{\kappa}%
_{1}}\right) \rho (\sigma _{2}).
\end{eqnarray*}

$\kappa _{g}(\sigma _{2})=\tilde{\kappa}_{1}^{2}\dfrac{d}{d\sigma _{2}}%
\left( \dfrac{\tilde{\kappa}_{2}}{\tilde{\kappa}_{1}}\right) $. Similarly,
we get $\bar{\kappa}_{g}=\widetilde{\bar{\kappa}_{1}}^{2}\dfrac{d}{d%
\overline{\sigma _{2}}}(\dfrac{\widetilde{\bar{\kappa}_{2}}}{\widetilde{\bar{%
\kappa}_{1}}}).$ So, $\bar{\kappa}_{g}$ is equal to $\kappa _{g}.$

If the curve $\gamma $ is a binormal indicatrix curve with arc length
parameter $\sigma _{3}$ then the following Frenet-Serret formulas hold%
\begin{eqnarray*}
t(\sigma _{3}) &=&\dfrac{d}{d\sigma _{3}}\gamma (\sigma _{3}) \\
\dfrac{d}{d\sigma _{3}}t(\sigma _{3}) &=&-\gamma (\sigma _{3})+\frac{\tilde{%
\kappa}_{1}}{\tilde{\kappa}_{2}}\rho (\sigma _{2})
\end{eqnarray*}%
$\kappa _{g}(\sigma _{2})=\dfrac{\tilde{\kappa}_{1}}{\tilde{\kappa}_{2}}$.
Similarly, we get $\bar{\kappa}_{g}=\dfrac{\widetilde{\bar{\kappa}_{1}}}{%
\widetilde{\bar{\kappa}_{2}}}.$ So, $\kappa _{g}$ is equal to $\bar{\kappa}%
_{g}.$

The proof is completed.\newpage
\end{proof}


\begin{thebibliography}{99}
\bibitem{Carmo} do Carmo, M. P., Differential Geometry of curves and
surfaces, Prentice-Hall Inc.,(1976).

\bibitem{Greub} W. Greub, \textit{Linear Algebra}, 3rd. ed., Springer
Verlag, Heidelberg, 1967.

\bibitem{Hac} H. H. Hac\i saliho\u{g}lu, Differential Geometry , \textit{%
Faculty of \ Sciences and Arts, University of \.{I}n\"{o}n\"{u} Press}, 1983.

\bibitem{Hay} H. A. Hayden, On a general helix in a Riemannian $n$-space, 
\textit{Proc. London Math. Soc. }(2) 1931; 32:37--45.

\bibitem{Kuh} W. Kuhnel, Differential Geometry: Curves-Surfaces-Manifolds.
Wiesbaden: Brauncweig, (1999).

\bibitem{Mon} J. Monterde, \textit{Curves with constant curvature ratios},
Bolet 
\'{}%
\i n de la Sociedad Matem%
\'{}%
atica Mexicana13(2007), 177--186.

\bibitem{Ozd} E. \"{O}zdamar, H. H. Hacisaliho\u{g}lu, \textit{A
characterization of inclined curves in Euclidean-space}, Communication de la
facult%
\'{}%
e des sciences de L'Universit%
\'{}%
e d'Ankara 24(1975), \ 15--22.

\bibitem{Rad} R. Encheva and G. Georgiev, Shapes of space curves, J. Geom.
Graph. 7 (2003), 145--155.

\bibitem{Rad2} R. Encheva, G. Georgiev and H. Georgi Similar Frenet curves.
Results Math. 55 (2009), no. 3-4, 359--372.

\bibitem{Str} D. J. Struik, \textit{Lectures on Classical Differential
Geometry, }Dover, New-York, 1988.

\bibitem{Yil} S. Y\i lmaz, M. Turgut, \textit{A new version of Bishop frame
and an application to spherical images}, J. Math. Anal. Appl., 371 (2010)
764-776.

\bibitem{Uribe} Uribe-Vargas, Ricardo On vertices, focal curvatures and
differential geometry of space curves. Bull. Braz. Math. Soc. (N.S.) 36
(2005), no. 3, 285--307.
\end{thebibliography}
\end{document}